\documentclass{article}
\usepackage[utf8]{inputenc}
\usepackage{amsmath}
\usepackage{amsthm}
\usepackage{amssymb}
\usepackage{amsfonts}
\usepackage{bm}
\usepackage{bbm}
\usepackage{mathtools}
\usepackage{tikz}
\usepackage{tikz-cd}
\usepackage{slashed}
\usepackage{xfrac}  
\usepackage{tcolorbox}
\usepackage{comment}

\newtheorem{theorem}{Theorem}
\newtheorem{lemma}[theorem]{Lemma}
\newtheorem{fact}[theorem]{Fact}

\newtheorem{corollary}[theorem]{Corollary}
\newtheorem*{definition}{Definition}
\newtheorem*{convention}{Convention}
\newtheorem{question}[theorem]{Question}
\newtheorem{mainquestion}[theorem]{Main Question}
\newtheorem*{remark}{Remark}
\newtheorem*{example}{Example}

\usepackage[style=alphabetic]{biblatex}
\bibliography{Mathbib.bib}

\title{A note on ded $\kappa$ and the part-whole principle\thanks{The author thanks Nicholas Ramsey, Joel David Hamkins, and Gabriel Goldberg for discussion. Forthcoming in \textit{Philosophy of Science}.}}
\author{Yuanshan Li}

\begin{document}

\maketitle

\begin{abstract}
    We establish a connection between the part-whole principle and the quantity $\text{ded}$ $\kappa$ -- a generalized cardinal characteristic related to the number of Dedekind cuts of a linear order. As consequences, we improve a result of Mancosu and Massas \cite{mancosu2024totality} on generalized probability functions and propose some questions. 
\end{abstract}

\section{Introduction}

We recall Definitions 2.1 and 2.2 from Mancosu and Massas \cite{mancosu2024totality}:

\begin{definition}
    A generalized probability range is a tuple $(V, \le, +, 0)$ such that:
    \begin{itemize}
        \item $(V, \le)$ is a partial order with a least element $0$;
        \item $(V, +, 0)$ is a commutative monoid;
        \item if $x, y, z\in V$ and $x<y$ then $x+z<y+z$.
    \end{itemize}
\end{definition}

\begin{definition}
    Let $(V, \le, +, 0)$ be a generalized probability range. A generalized $V$-probability space is a tuple $(\Omega, \mathcal{F}, \mu)$ such that:
    \begin{itemize}
        \item $\mathcal{F}\subseteq 2^\Omega$ is an algebra;
        \item $\mu: \mathcal{F}\rightarrow V$ satisfies $\mu(A)\ge 0$ for all $A\in\mathcal{F}$ and $\mu(A\sqcup B)=\mu(A)+\mu(B)$ for disjoint $A, B\in\mathcal{F}$.
    \end{itemize}
    In this context, the function $\mu$ is to be called a generalized probability function, or a generalized probability. 
\end{definition}

The main objects of interest in \cite{mancosu2024totality} are generalized probabilities that are both \textit{total} and \textit{regular}, defined as follows:

\begin{definition}
    A generalized probability $\mu$ is total if its domain $\mathcal{F}=2^\Omega$.
\end{definition}

\begin{definition}
    A generalized probability $\mu$ is regular if $\mu(A)>0$ for all nonempty $A\in\mathcal{F}$.
\end{definition}

One of the main mathematical issues investigated in \cite{mancosu2024totality} is the extent to which the existence of regular total generalized probabilities can be characterized in terms of the cardinalities of the associated sets $\Omega$ and $V$. Hofweber and Schindler \cite{hofweber2016hyperreal} obtained the following result via an elementary compactness argument:

\begin{theorem}[Hofweber and Schindler]
    For any $\Omega$ there is a hyperreal field\footnote{Note that a hyperreal field is a model of the axioms of real closed fields, hence satisfies Definition 1.} $\mathbb{R}^*$ with $|\mathbb{R}^*|\le2^{|\Omega|}$ and a regular total generalized probability function $\mu: 2^\Omega\rightarrow \mathbb{R}^*$.
\end{theorem}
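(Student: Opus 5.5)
We prove the theorem with an ultrapower over finite subsets of $\Omega$, which is the usual concrete form of the compactness argument. Let $I = [\Omega]^{<\omega}$ be the set of finite subsets of $\Omega$. For each $a \in \Omega$, put $\hat a = \{s \in I : a \in s\}$. Any finitely many sets $\hat a_1, \dots, \hat a_n$ have nonempty intersection, since it contains $\{a_1, \dots, a_n\}$. So the family $\{\hat a : a \in \Omega\}$ has the finite intersection property and extends to an ultrafilter $\mathcal{U}$ on $I$.

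Define $\mathbb{R}^* = \mathbb{R}^I/\mathcal{U}$. By Łoś's theorem this is a hyperreal field (a real closed field elementarily extending $\mathbb{R}$), and by the footnote it is a generalized probability range. For $A \subseteq \Omega$ set
\[
\mu(A) = \bigl[\, s \mapsto |A \cap s| \,\bigr]_{\mathcal{U}}.
\]
One could normalize by dividing by $|s|$ on nonempty $s$, but normalization is not required.

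The two required properties follow quickly.

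\emph{Additivity.} For each fixed $s$ and disjoint $A, B$ we have $|(A \sqcup B) \cap s| = |A \cap s| + |B \cap s|$. Hence the equality holds coordinatewise, and therefore in the ultrapower.

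\emph{Regularity.} Suppose $A \neq \emptyset$ and pick $a \in A$. For every $s \in \hat a$ we have $|A \cap s| \ge 1$, and $\hat a \in \mathcal{U}$. So $\mu(A) \ge 1 > 0$ in $\mathbb{R}^*$.

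\emph{Non-negativity and totality.} $\mu(A) \ge 0$ holds coordinatewise, and $\mu$ is defined on all of $2^\Omega$.

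The remaining point, which is the main obstacle, is the cardinality bound $|\mathbb{R}^*| \le 2^{|\Omega|}$. For infinite $\Omega$, a naive count gives $|\mathbb{R}^I| = (2^{\aleph_0})^{|\Omega|} = 2^{|\Omega|}$, since $|I| = |\Omega|$. So the full ultrapower already has size at most $2^{|\Omega|}$.

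If $\Omega$ is finite, we instead take $\mathbb{R}^* = \mathbb{R}$ with the counting measure. The bound $|\mathbb{R}| \le 2^{|\Omega|}$ fails then, but taking $\mathbb{R}^* = \mathbb{Q}$-real-closure, i.e. the real algebraic numbers, which are countable and contain the integer counting values, handles this case. If only the countable case is wanted, one can uniformly take the elementary substructure generated by the values of $\mu$, as below.

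A refinement makes the bound robust in all cases. Rather than the full ultrapower, take the real closure inside $\mathbb{R}^*$ of the subfield generated by $\mathbb{R} \cup \{\mu(A) : A \subseteq \Omega\}$. Alternatively, take an elementary substructure of $(\mathbb{R}^*, +, \cdot, <)$ containing $\mathbb{R}$ and $\operatorname{ran}\mu$, obtained by Löwenheim–Skolem. This substructure has size at most $2^{\aleph_0} + 2^{|\Omega|} = 2^{|\Omega|}$ for infinite $\Omega$. It is still a hyperreal field by elementarity, and $\mu$ takes values in it with all order relations preserved. This gives the stated bound.
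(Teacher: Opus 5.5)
Your argument is correct for infinite $\Omega$, and it is essentially the route the paper indicates. The ultrapower $\mathbb{R}^I/\mathcal{U}$ over a fine ultrafilter on $[\Omega]^{<\omega}$, with $\mu(A)=[s\mapsto |A\cap s|]_{\mathcal{U}}$, is the explicit \L{}o\'{s}-theorem form of the compactness argument the paper attributes to Hofweber and Schindler. Finite satisfiability is witnessed by counting inside a finite $s$. Your direct count $|\mathbb{R}^I|=(2^{\aleph_0})^{|\Omega|}=2^{|\Omega|}$ replaces the downward L\"owenheim--Skolem step, so your closing ``refinement'' paragraph is unnecessary. Strictly, the range $V$ should be the nonnegative part of $\mathbb{R}^*$, since Definition 1 requires $0$ to be the least element; your $\mu$ does take values there.

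Your finite-$\Omega$ paragraph is wrong, however. The real algebraic numbers are countably infinite, which exceeds the finite number $2^{|\Omega|}$, and your L\"owenheim--Skolem refinement still contains $\mathbb{R}$, so it does not help either. In fact no repair is possible: every ordered field has characteristic $0$ and is therefore infinite, so no ordered field can satisfy $|\mathbb{R}^*|\le 2^{|\Omega|}$ when $\Omega$ is finite. The theorem has to be read with $\Omega$ infinite, and you should state that restriction rather than claim the finite case is handled.
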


Theorem 3.1 of \cite{mancosu2024totality}, an improvement of Corollary 1 of Pruss \cite{pruss2013probability}, states the following:

\begin{theorem}[Mancosu and Massas]
    If $|V|\le|\Omega|$ then there is no regular total generalized probability function $\mu: 2^\Omega\rightarrow V$.
\end{theorem}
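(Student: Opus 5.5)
The plan is to turn regularity into strict monotonicity, and then find a chain in $2^\Omega$ that is too long to embed into $V$.

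\textbf{Step 1: $\mu$ is strictly monotone.} Suppose $A\subsetneq B\subseteq\Omega$. Since $\mu$ is total, $B\setminus A\in\mathcal F$, and $\mu(B)=\mu(A)+\mu(B\setminus A)$. Regularity gives $0<\mu(B\setminus A)$. Adding $\mu(A)$ to both sides and using the third axiom of a generalized probability range yields $\mu(A)=\mu(A)+0<\mu(A)+\mu(B\setminus A)=\mu(B)$. Hence $\mu$ maps any chain in $(2^\Omega,\subsetneq)$ injectively into $V$. It therefore suffices to exhibit a chain in $2^\Omega$ of cardinality strictly greater than $|\Omega|$; this contradicts $|V|\le|\Omega|$.

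\textbf{Step 2: a long chain.} If $\Omega=\{a_1,\dots,a_n\}$ is finite, the chain $\emptyset\subsetneq\{a_1\}\subsetneq\dots\subsetneq\Omega$ has $n+1$ elements. Now let $|\Omega|=\kappa$ be infinite, and let $\lambda\le\kappa$ be least with $2^\lambda>\kappa$; such $\lambda$ exists by Cantor. Put $T=2^{<\lambda}$. Then $|T|=\sum_{\alpha<\lambda}2^{|\alpha|}\le\lambda\cdot\kappa=\kappa$, because $2^{|\alpha|}\le\kappa$ for $\alpha<\lambda$ by minimality. So we may identify $T$ with a subset of $\Omega$. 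For $f\in 2^\lambda$ define
\[
C_f=\{s\in T:\ \exists\alpha<\operatorname{dom}(s)\ (s\restriction\alpha=f\restriction\alpha,\ s(\alpha)=0,\ f(\alpha)=1)\}.
\]

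\textbf{Step 3: the sets $C_f$ form a chain of size $2^\lambda$.} Take $f\ne g$ in $2^\lambda$, let $\alpha$ be the first point where they differ, and say $f(\alpha)=0$ and $g(\alpha)=1$. We check that $C_f\subseteq C_g$ by comparing the witness $\beta$ for $s\in C_f$ with $\alpha$.
\begin{itemize}
\item If $\beta<\alpha$, then $f$ and $g$ agree up to and including $\beta$, so $\beta$ witnesses $s\in C_g$.
\item The case $\beta=\alpha$ is impossible, since it would require $f(\alpha)=1$.
\item If $\beta>\alpha$, then $s\restriction\alpha=g\restriction\alpha$ and $s(\alpha)=f(\alpha)=0<g(\alpha)$, so $\alpha$ witnesses $s\in C_g$.
\end{itemize}
The inclusion is strict, since $(g\restriction\alpha)^\frown 0\in C_g\setminus C_f$. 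Thus $\{C_f: f\in 2^\lambda\}$ is a chain of distinct subsets of $T\subseteq\Omega$ of size $2^\lambda>\kappa$. Combined with Step 1, this gives the required contradiction.

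The main obstacle is Step 2: producing a chain in $2^\Omega$ longer than $|\Omega|$. This amounts to showing $\mathrm{ded}\,\kappa>\kappa$ via the Dedekind cuts of the lexicographically ordered tree $2^{<\lambda}$. The delicate points are the cardinality bound $|2^{<\lambda}|\le\kappa$, which uses the minimality of $\lambda$, and the case check in Step 3.
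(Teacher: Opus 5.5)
Your proof is correct and follows the paper's Zermelo-free route. Step 1 is Lemma 6. Steps 2--3 make explicit the footnote to Fact 10: the tree $2^{<\lambda}$ has size at most $\kappa$ and $2^\lambda>\kappa$ branches, and you write the left cuts determined by the branches directly as a $\subsetneq$-chain, bypassing Theorem 8 and Corollary 9 (and the paper's Zermelo fixed-point argument). One point is used tacitly: $(g\restriction\alpha)^\frown 0\in 2^{<\lambda}$ because $\lambda$ is an infinite cardinal and hence a limit ordinal.
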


Recall that the continuum hypothesis ($\mathsf{CH}$) states that the cardinality of the powerset of $\omega$ is the first uncountable cardinal, i.e. $2^\omega=\aleph_1$; the generalized continuum hypothesis ($\mathsf{GCH}$) at a cardinal $\kappa$ states that the cardinality of the powerset of $\kappa$ is its successor cardinal, i.e. $2^\kappa=\kappa^+$. By Theorems 1 and 2, it follows that:

\begin{corollary}[Mancosu and Massas]
    If $\mathsf{GCH}$ holds at $|\Omega|$, then there exists a regular total generalized probability function $\mu: 2^\Omega\rightarrow V$ iff $|V|\ge2^{|\Omega|}$.
\end{corollary}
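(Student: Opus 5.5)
Throughout, write $\kappa=|\Omega|$. Since the corollary is about cardinalities, I read ``there exists a regular total generalized probability function $\mu:2^\Omega\rightarrow V$ iff $|V|\ge 2^{|\Omega|}$'' as follows: a generalized probability range $V$ of a given cardinality $\lambda$ carrying such a $\mu$ exists iff $\lambda\ge 2^\kappa$. The plan is to get one direction from each of Theorems 1 and 2, and to use $\mathsf{GCH}$ at $\kappa$ only to close the gap between them.

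\emph{Necessity.} Suppose $\mu:2^\Omega\rightarrow V$ is regular and total, and assume toward a contradiction that $|V|<2^\kappa$. By $\mathsf{GCH}$ at $\kappa$ we have $2^\kappa=\kappa^+$, so $|V|<\kappa^+$, i.e.\ $|V|\le\kappa=|\Omega|$. Theorem 2 then says no regular total generalized probability $2^\Omega\rightarrow V$ exists, a contradiction. Hence $|V|\ge 2^\kappa$. This direction is immediate once $\mathsf{GCH}$ rules out cardinals strictly between $\kappa$ and $2^\kappa$.

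\emph{Sufficiency.} Let $\lambda\ge 2^\kappa$. Theorem 1 gives a hyperreal field $\mathbb{R}^*$ with $|\mathbb{R}^*|\le 2^\kappa$ and a regular total $\mu:2^\Omega\rightarrow\mathbb{R}^*$. We need a range of size exactly $\lambda$, so I enlarge $\mathbb{R}^*$. By the upward Löwenheim--Skolem theorem, $\mathbb{R}^*$ has an elementary extension $F$ of cardinality $\lambda$; note $\mathbb{R}^*$ is infinite and the language is countable, so this applies. Then:
\begin{itemize}
\item $F$ is again a real closed field, hence satisfies Definition 1 with its order, addition and $0$;
\item the inclusion $\iota:\mathbb{R}^*\hookrightarrow F$ preserves $+$, $0$ and strict order.
\end{itemize}
It follows that $\iota\circ\mu$ is additive on disjoint sets. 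It is also nonnegative, and strictly positive on nonempty sets. So $\iota\circ\mu:2^\Omega\rightarrow F$ is a regular total generalized probability into a range of cardinality $\lambda$.

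The step needing the most care is this last one: Theorem 1 bounds $|\mathbb{R}^*|$ only from above, so the range it supplies must be padded up to the required size without destroying regularity or additivity. If one prefers to avoid model theory, an alternative is the lexicographically ordered product of $\mathbb{R}^*$ with a set of size $\lambda$ carrying a suitable ordered monoid structure, embedding $\mathbb{R}^*$ via $x\mapsto (x,0)$. The elementary-extension route is cleaner, so it is the one I would use.
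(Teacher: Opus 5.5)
Your argument is correct and follows the paper's route. The paper obtains this corollary by combining Theorem 2, with $\mathsf{GCH}$ reducing $|V|<2^\kappa$ to $|V|\le\kappa$, for necessity, and Theorem 1 for sufficiency; it leaves implicit the L\"owenheim--Skolem padding you supply to reach any prescribed size $\lambda\ge 2^\kappa$. One small technicality, which the paper's footnote also glosses over: an ordered field has no least element, so the range satisfying Definition 1 should be the nonnegative cone $F_{\ge 0}$, not $F$ itself; $F_{\ge 0}$ still has cardinality $\lambda$ and contains the image of $\iota\circ\mu$ because $\mu\ge 0$.
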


Corollary 3 invites a natural question, namely whether the assumption of $\mathsf{GCH}$ can be removed, as it is well-known that various failures of $\mathsf{GCH}$ are consistent with $\mathsf{ZFC}$. 

\begin{mainquestion}
    Is it consistent with $\mathsf{ZFC}$ that there is a regular total generalized probability function $\mu: 2^\Omega\rightarrow V$ with $|V|< 2^{|\Omega|}$?
\end{mainquestion}

In this note we make some very modest clarifications on the Main Question: first, we improve Theorem 2 by weakening the hypothesis $|V|\le|\Omega|$, sharpening the existing results and leading to some necessary conditions that the candidate set-theoretic models for the Main Question must satisfy. Second, this is done merely using the order structure on $V$, which avoids the complications introduced by the additive structure.

To illustrate the content of our improvement, let us consider the special case where $\Omega$ is countable. In this case, the Main Question reduces to the following.

\begin{question}
    Is it consistent with $\mathsf{ZFC}$ that there is a regular total generalized probability function $\mu: 2^\omega\rightarrow V$ with $|V|< 2^\omega$?
\end{question}

By Corollary 3, there is no such function if $\mathsf{CH}$ holds. However, Corollary 3 does not settle the question when the cardinality of continuum is large. For example, if $2^\omega=\aleph_2$, is there a regular total generalized probability function for subsets of $\omega$ whose range has cardinality $\aleph_1$? As the reader shall see, it follows from our results in this note that the answer to Question 5 is negative. 

\section{Main results}

The part-whole principle states that if $A, B$ are sets and $A\subsetneq B$, then the size of $A$ is strictly less than the size of $B$. This principle is of independent interest in set theory and its philosophy. It motivates the following definition:

\begin{definition}
    A part-whole size function (for subsets of $\kappa$) is an order homomorphism $\mu: (2^\kappa, \subsetneq)\rightarrow (V, <)$, where $(V, <)$ is a strict partial order.\footnote{One might think that the sizes of any two sets should be comparable, so it may appear natural to require the range of size functions to be linearly ordered. Here we don't make this requirement to coordinate with generalized probability ranges being merely partially ordered. All results in our note remain valid if we impose linearity in both definitions.}
\end{definition}

Part-whole size functions are closely related to the previous setting in the following way:

\begin{lemma}
    If $\mu$ is a regular total generalized probability function then its reduct $\mu: (2^\Omega, \subsetneq)\rightarrow (V, <)$ is a part-whole size function for subsets of $\Omega$.
\end{lemma}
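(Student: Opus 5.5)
We have to show that whenever $A\subsetneq B\subseteq\Omega$, we get $\mu(A)<\mu(B)$ in $V$. Here $<$ is the strict part of the partial order $\le$ of the generalized probability range. Since $\mu$ is total, every subset of $\Omega$ is in the domain, so $\mu$ is defined on all of $2^\Omega$ and the reduct makes sense. The plan is to split $B$ into $A$ and its complement in $B$, then use additivity, regularity, and strict monotonicity of addition.

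Fix $A\subsetneq B\subseteq\Omega$ and set $C=B\setminus A$. Then $C$ is nonempty, $A\cap C=\emptyset$ and $B=A\sqcup C$. Additivity of $\mu$ gives $\mu(B)=\mu(A)+\mu(C)$. Regularity gives $\mu(C)>0$, that is, $0<\mu(C)$ in $V$.

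Next we apply the third axiom of a generalized probability range with $x=0$, $y=\mu(C)$ and $z=\mu(A)$. From $0<\mu(C)$ we get $0+\mu(A)<\mu(C)+\mu(A)$. Since $0$ is the monoid identity and $+$ is commutative, this reads $\mu(A)<\mu(A)+\mu(C)=\mu(B)$, as required.

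It remains to note that $(V,<)$ is a strict partial order, since it is the strict part of the partial order $\le$. So $\mu$ is an order homomorphism $(2^\Omega,\subsetneq)\to(V,<)$, which is exactly a part-whole size function. There is no real obstacle here. The only point needing care is that the strict inequality comes from the strict-monotonicity axiom applied to $0<\mu(C)$, and not from the weaker fact $\mu(A)\ge 0$.
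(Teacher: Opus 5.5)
Your proposal is correct and follows the paper's proof exactly. Both set $C=B\setminus A$, use regularity to get $0<\mu(C)$ and additivity to get $\mu(B)=\mu(A)+\mu(C)$, then apply the strict-monotonicity axiom with $x=0$, $y=\mu(C)$, $z=\mu(A)$ to conclude $\mu(A)<\mu(B)$.
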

\begin{proof}
    Let $A\subsetneq B$, so $C=B\setminus A$ is nonempty. By regularity, $\mu(C)>0$. By finite additivity, $\mu(A)+\mu(C)=\mu(B)$. We have for $x, y, z\in V$, $x<y$ implies $x+z<y+z$. In particular, $0<\mu(C)$ implies 
    \[0+\mu(A)<\mu(C)+\mu(A)=\mu(B).\]
    Hence $\mu(A)<\mu(B)$. 
\end{proof}

From now on, the chief objects of interest would be part-whole size functions. For convenience, we make the following convention:

\begin{convention}
    We shall identify the base space $\Omega$ with a cardinal $\kappa$. 
\end{convention}

We note that once the additive structure is removed, it already makes the proof of Theorem 2 much simpler than the proof written down in \cite{mancosu2024totality}, even following their existing approach, which involves the following fixed point theorem attributed to Zermelo.\footnote{For an informative discussion of Zermelo's result and its context, see Kanamori \cite{kanamori1997mathematical}, where it appears as Corollary 2.3.}

\begin{fact}[Zermelo]
    For any function $F: 2^\kappa \rightarrow\kappa$ there is $A \subsetneq B$ s.t. $F(A) = F(B)$.
\end{fact}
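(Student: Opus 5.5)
We prove Fact 9 by a transfinite recursion driven by $F$: we build a sequence of subsets of $\kappa$, strictly increasing under inclusion, that is so long it cannot be injective under $F$. We use only the well-ordering of $\kappa$ and the ordinals, and no choice beyond what is needed to pick values.

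\emph{Construction.} Suppose, towards a contradiction, that $F(A)\neq F(B)$ whenever $A\subsetneq B$. Define sets $A_\alpha\subseteq\kappa$ by recursion on ordinals $\alpha$:
\[
A_\alpha=\{F(A_\beta):\beta<\alpha\}.
\]
We run the recursion as long as $F(A_\alpha)\notin A_\alpha$.

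\emph{Key step.} We show that $F(A_\alpha)\notin A_\alpha$ holds at every stage. By construction the sequence is increasing: $\beta<\alpha$ gives $A_\beta\subseteq A_\alpha$. If $F(A_\alpha)\notin A_\alpha$ for all $\alpha<\gamma$, then each $A_{\alpha+1}=A_\alpha\cup\{F(A_\alpha)\}$ strictly contains $A_\alpha$. Hence the values $F(A_\beta)$ for $\beta<\gamma$ are pairwise distinct. Indeed, $\beta<\beta'$ gives $F(A_\beta)\in A_{\beta'}$, while $F(A_{\beta'})\notin A_{\beta'}$.

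Now suppose that at some stage $\gamma$ we have $F(A_\gamma)\in A_\gamma$. Then $F(A_\gamma)=F(A_\beta)$ for some $\beta<\gamma$. Since $F(A_\beta)\in A_{\beta+1}\subseteq A_\gamma$ but $F(A_\beta)\notin A_\beta$, we get $A_\beta\subsetneq A_\gamma$. So $A=A_\beta$ and $B=A_\gamma$ witness the Fact, contradicting our assumption.

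\emph{Termination.} It remains to see that the recursion must reach such a $\gamma$, and this is the real content of the argument. If it never stopped, then $\alpha\mapsto F(A_\alpha)$ would be an injection from the proper class of ordinals, or more simply from $\kappa^+$, into $\kappa$. That is impossible. Concretely, we run the recursion only for $\alpha<\kappa^+$. If no stopping stage occurs before $\kappa^+$, the map $\alpha\mapsto F(A_\alpha)$ injects $\kappa^+$ into $\kappa$, which is a contradiction. So some $\gamma<\kappa^+$ satisfies $F(A_\gamma)\in A_\gamma$, and the key step then yields $A\subsetneq B$ with $F(A)=F(B)$.

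The only delicate point is checking that $A_\beta\subsetneq A_\gamma$ is strict, and that is exactly why we track the invariant $F(A_\alpha)\notin A_\alpha$ before the stopping stage.
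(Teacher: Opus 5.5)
Your proof is correct. It is Zermelo's argument: iterate $F$ on the set of values already produced, and stop at the first $\gamma$ with $F(A_\gamma)\in A_\gamma$. The invariant $F(A_\beta)\notin A_\beta$ for $\beta<\gamma$ does double duty. It gives injectivity of $\alpha\mapsto F(A_\alpha)$, which forces a stop below $\kappa^+$, and it gives strictness of $A_\beta\subsetneq A_\gamma$.

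The paper gives no proof of its own, only the citation to Kanamori. There the Fact is derived from this same well-ordering generated by $F$, so you have supplied the intended argument.

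The paper's own machinery, introduced precisely to avoid the Fact, suggests a genuinely different route. By $\kappa<\text{ded }\kappa$ and the cut/chain equivalence, there is a $\subsetneq$-chain in $2^\kappa$ of cardinality greater than $\kappa$. So any $F$ into $\kappa$ takes equal values on two of its members, which are necessarily properly nested. That route gives one chain serving every $F$ at once. But it rests on the cardinal arithmetic behind $\kappa<\text{ded }\kappa$ (a $\mu$ with $2^{<\mu}\le\kappa<2^\mu$), which the paper argues in $\mathsf{ZFC}$.

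For infinite $\kappa$, a well-ordered $\subsetneq$-chain in $2^\kappa$ has cardinality at most $\kappa$. So the paper's chain cannot be well-ordered. Yours, being well-ordered, is too short for pigeonhole alone, which is why it has to be steered by $F$. The payoff is that your argument works in $\mathsf{ZF}$: $F$ supplies every value, so no choice is needed at all, not even to pick values.
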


It readily follows that there can be no part-whole size function if $|V|\le\kappa$: suppose $\mu: 2^\kappa\rightarrow V$ is such a function, fix an injection $i: V\rightarrow\kappa$. By Zermelo's theorem applied to $i\circ \mu$, there is $A\subsetneq B$ such that $i(\mu(A))=i(\mu(B))$. Since $i$ is injective, $\mu(A)=\mu(B)$, but $A\subsetneq B$, contradicting part-whole. By Lemma 6, if $|V|\le\kappa$ there can't be regular total generalized probability functions as well.

We now state our main result, from which we can prove a stronger version of Theorem 2 without using Zermelo's fixed point theorem.

The key observation is the following: by the part-whole principle, if $\mu$ is a part-whole size function and $A\subsetneq B$, then $\mu(A)<\mu(B)$, in particular they are different values in $V$. In general, if $(A_i: i\in I)$ forms a strictly increasing chain of subsets of $\kappa$, meaning that $I$ is a linear order and if $i<_Ij$ then $A_i\subsetneq A_j$, we need at least $|I|$-many values in $V$. Therefore the supremum of the lengths of these strictly increasing chains of subsets gives a lower bound of the number of values needed. It turns out that this quantity can be identified with $\text{ded }\kappa$, a generalized cardinal characteristic that has been independently studied in mathematical logic.

\begin{definition}
    For a linear order $(I, <)$, a Dedekind cut is a pair $(L, R)$ such that $L, R$ are disjoint subsets of $I$, $I=L\sqcup R$, and $i<j$ for all $i\in L$ and $j\in R$. In this setting, $L$ is called a left cut and $R$ is called a right cut.
\end{definition}

\begin{definition}
    For a cardinal $\kappa$, $\text{ded }\kappa=$
    \[\sup\{\lambda: \text{ there is a linear order of cardinality }\kappa\text{ with }\lambda\text{-many Dedekind cuts}\}.\]
\end{definition}

\begin{theorem}
    The following are equivalent: i) there is a linear order of size $\kappa$ with $\lambda$-many cuts; ii) there is a $\subsetneq$-chain of subsets of $\kappa$ with size $\lambda$.
\end{theorem}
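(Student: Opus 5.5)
We prove the two directions separately, in each case building the required object directly from the given one.

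\emph{(i)$\Rightarrow$(ii).} Let $(I,<)$ be a linear order with $|I|=\kappa$ and $\lambda$-many Dedekind cuts. Fix a bijection $e: I\rightarrow\kappa$. To each cut $(L,R)$ associate the set $e[L]\subseteq\kappa$. A cut is determined by its left part, so this map is injective on cuts. For cuts $(L,R),(L',R')$, either $L\subseteq L'$ or $L'\subseteq L$. Indeed, if some $i\in L\setminus L'$ and $j\in L'\setminus L$ existed, then $i\in R'$ and $j\in R$. The cut conditions then give $j<i$ and $i<j$, a contradiction. Hence the family of left cuts is linearly ordered by inclusion. Distinct left cuts are related by $\subsetneq$, so $\{e[L]:(L,R)\text{ a cut}\}$ is a $\subsetneq$-chain of subsets of $\kappa$ of size $\lambda$.

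\emph{(ii)$\Rightarrow$(i).} Let $\mathcal{C}$ be a $\subsetneq$-chain of subsets of $\kappa$ with $|\mathcal{C}|=\lambda$. We want a linear order of size $\kappa$ with at least $\lambda$ cuts, and then adjust it to get exactly $\lambda$ cuts. Define a preorder on $\kappa$ by setting $\alpha\preceq\beta$ iff every $A\in\mathcal{C}$ containing $\beta$ also contains $\alpha$. Since $\mathcal{C}$ is a chain, this preorder is total: the sets $\{A\in\mathcal{C}:\alpha\in A\}$ are upward-closed in $\mathcal{C}$, so they are linearly ordered by inclusion. Each $A\in\mathcal{C}$ is then a $\preceq$-downward closed subset of $\kappa$, i.e.\ a left cut of the preorder. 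To get a linear order of size exactly $\kappa$, use the lexicographic product: order $\kappa$ by $\preceq$ first and break ties within each equivalence class using the ordinal order. Each $A\in\mathcal{C}$ is a union of equivalence classes and is downward closed, so it remains a left cut of this linear order $(\kappa,<^*)$. Distinct members of $\mathcal{C}$ give distinct cuts, so $(\kappa,<^*)$ has at least $\lambda$ cuts.

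\emph{Main obstacle.} The remaining step, which I expect to be the main difficulty, is getting \emph{exactly} $\lambda$ cuts. By the argument for (i)$\Rightarrow$(ii), the cuts of $(\kappa,<^*)$ form a $\subsetneq$-chain $\mathcal{D}\supseteq\mathcal{C}$, which may be larger than $\mathcal{C}$. I would first replace the chain: shrink $\kappa$ to a suborder whose cuts correspond closely to $\mathcal{C}$. Concretely, for each pair $A\subsetneq B$ in $\mathcal{C}$ choose a witness in $B\setminus A$. Pick a set $D\subseteq\kappa$ of witnesses with $|D|\le\min(\kappa,\lambda)$, chosen so that distinct members of $\mathcal{C}$ still have distinct traces on $D$. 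The cuts of $D$ then number at least $\lambda$. Next, pad $D$ back to size $\kappa$ by appending $\kappa$ (with its usual order) at one end; this adds at most $\kappa\le\lambda$ cuts. Here $\lambda\ge\kappa$ can be assumed, since otherwise both sides are trivial, or we simply take a well-order of $\kappa$ truncated to the needed size. If exactness still fails, I would fall back on reading the theorem as ``at least $\lambda$'' in both clauses. That reading is all that is needed for the identification with $\text{ded }\kappa$ as a supremum, and the two constructions above already prove it.
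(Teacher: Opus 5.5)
Your two constructions are the paper's. For (i)$\Rightarrow$(ii), the paper likewise takes the chain of left cuts. For (ii)$\Rightarrow$(i), your preorder ($\alpha\preceq\beta$ iff every member of $\mathcal{C}$ containing $\beta$ contains $\alpha$) is exactly the paper's comparison $I_\alpha\subseteq I_\beta$, where $I_x=\{i\in I: x\notin A_i\}$, followed by tie-breaking. Breaking ties by the ordinal order, as you do, even makes that step choice-free, whereas the paper breaks ties arbitrarily. The paper's proof also concludes only that the resulting order has \emph{at least} $\lambda$ cuts, which is all Corollary 9 uses.

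You should drop the ``main obstacle'' paragraph rather than keep it as a tentative step. Exactness is not merely hard to achieve but false in general, and several claims there are wrong.

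If $\kappa$ is infinite and $\lambda<\kappa$, then (ii) holds, but no linear order of size $\kappa$ has exactly $\lambda$ cuts. The map $x\mapsto\{y: y<x\}$ already yields $\kappa$ distinct left cuts. So the two sides are not ``both trivial'', and a well-order truncated to size $\lambda$ no longer has size $\kappa$.

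For $\lambda\ge\kappa$, your set $D$ only guarantees at least $\lambda$ cuts, and no upper bound is available. For a countable order $L$, the left cuts form a closed subset of $2^{L}$, so by Cantor--Bendixson there are either countably many or exactly $2^{\aleph_0}$ of them. Thus if $2^{\aleph_0}>\aleph_1$, no countable order has exactly $\aleph_1$ cuts. Yet $\omega$ carries a $\subsetneq$-chain of size $\aleph_1$: take any $\aleph_1$ of the cuts of $\mathbb{Q}$.

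So reading (i) as ``at least $\lambda$ cuts'' is not a fallback but the right reading, as the paper's proof effectively does. Under that reading, your first two paragraphs already constitute a complete proof.
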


\begin{proof}
    Clearly $i)\Rightarrow$ ii), since the collection of left cuts forms a $\subsetneq$-chain.
    
    Conversely, if there is a $\subsetneq$-chain of subsets of $\kappa$ with cardinality $\lambda$, then we can endow any set $A$ that has cardinality $\kappa$ with a linear order structure such that it has $\lambda$-many cuts. To see this, let $(A_i: i\in I)$ be a $\subsetneq$-chain, where $A_i\subseteq A$, and $I$ is a linear order with cardinality $\lambda$. For each $x\in A$, define
    \[I_x=\{i\in I: x\not\in A_i\}.\]
    We order $A$ by $x<_Ay$ iff $I_x\subsetneq I_y$, i.e. there exists $i\in I$ s.t. $x\in A_i$ but $y\not\in A_i$. Intuitively, this means that $x$ ``shows up earlier'' in the chain $(A_i: i\in I)$ than $y$. If $I_x=I_y$ then the $<_A$-order of $x, y$ is arbitrary. Clearly the order $(A, <_A)$ is a linear order of size $\kappa$. Moreover, it has at least $\lambda$-many cuts. To see this, we verify that for each $i\in I$, $A_i$ is a different left cut:

    1. if $y\in A_i$ and $x<_Ay$, then $I_x\subsetneq I_y$. Since $y\in A_i$, $i\not\in I_y$, hence $i\not\in I_x$, so $x\in A_i$. This shows that $A_i$ is downward closed and so is a left cut. 

    2. let $i<_Ij$, then $A_i\subsetneq A_j$, so there is $x\in A_j\setminus A_i$. This shows that $A_i$, $A_j$ are different cuts.
\end{proof}

\begin{remark}
    Note that the above construction involves certain fragments of the Axiom of Choice ($\mathsf{AC}$). The order extension principle states that every partial order can be extended to a linear order. It is a consequence of $\mathsf{AC}$ but is not provable from $\mathsf{ZF}+$``every set can be linearly ordered'', by a result of Mathias \cite{mathiasorderextension}. The above construction can be understood as first defining the order $<_A$ as a partial order given by the ``shows up earlier than'' relation, and then extend it to a linear order using the order extension principle. Also note that Hofweber and Schindler's proof of Theorem 1 uses the compactness of first order logic, which is equivalent to the Boolean prime ideal theorem, also a weak form of $\mathsf{AC}$. Here we remind the reader that we take the basic setting of this note to be $\mathsf{ZFC}$.
\end{remark}

The following corollary follows from Theorem 8.

\begin{corollary}
    If $|V|<\text{ded }\kappa$, then there is no part-whole size function $\mu: (2^\kappa, \subsetneq)\rightarrow (V, <)$. In particular, there is no regular total generalized probabilities.
\end{corollary}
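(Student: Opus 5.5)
The plan is to argue by contrapositive: a part-whole size function forces $|V|\ge\lambda$ for every $\lambda$ witnessed in the supremum defining $\text{ded }\kappa$. The supremum then yields $|V|\ge\text{ded }\kappa$.

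Suppose $\mu:(2^\kappa,\subsetneq)\rightarrow(V,<)$ is a part-whole size function and $|V|<\text{ded }\kappa$. Since $\text{ded }\kappa$ is a supremum of cardinals $\lambda$ for which some linear order of size $\kappa$ has $\lambda$-many Dedekind cuts, and $|V|$ is strictly below that supremum, there is such a $\lambda$ with $\lambda>|V|$. (If the supremum is attained, we take the maximal witness; otherwise some witness exceeds $|V|$ by the definition of supremum. Either way some witness is strictly larger than $|V|$.)

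By Theorem 8, direction i)$\Rightarrow$ii), there is a $\subsetneq$-chain $(A_i:i\in I)$ of subsets of $\kappa$ with $|I|=\lambda$. To apply this to $\kappa$ itself, we transport the left cuts of the order along a bijection between the order and $\kappa$. We restrict $\mu$ to this chain. For $i<_I j$ we have $A_i\subsetneq A_j$, so $\mu(A_i)<\mu(A_j)$, and in particular $\mu(A_i)\neq\mu(A_j)$ by irreflexivity of the strict order. By linearity of $I$, any two distinct indices are comparable, so $i\mapsto\mu(A_i)$ is injective from $I$ into $V$. Hence $\lambda=|I|\le|V|$, contradicting $\lambda>|V|$.

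For the ``in particular'' clause, Lemma 6 says the reduct of a regular total generalized probability is a part-whole size function, so none can exist when $|V|<\text{ded }\kappa$. The only step needing care is extracting a witness $\lambda>|V|$ from the strict inequality with a supremum, and this is routine. The rest is immediate from Theorem 8 and the injectivity observation.
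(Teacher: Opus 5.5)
Your proof is correct and follows the paper's argument: you pick a witness $\lambda>|V|$ from the supremum defining $\text{ded }\kappa$, use direction i)$\Rightarrow$ii) of Theorem 8 to get a $\subsetneq$-chain of length $\lambda$ in $2^\kappa$, note that $\mu$ is injective on that chain, and invoke Lemma 6 for the probability clause. Your case split on whether the supremum is attained mirrors the paper but is not needed, since $|V|<\sup S$ already means $|V|$ is not an upper bound of $S$.
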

\begin{proof}
    Fix a cardinal $\lambda$ by cases:
    
    1. if $\text{ded }\kappa$ is realized, meaning that the set
    
    \[S=\{\lambda: \text{ there is a linear order of size }\kappa\text{ with }\lambda\text{-many cuts}\}\]
    
    contains a maximum (which must be equal to $\text{ded }\kappa$), let $\lambda=\text{ded }\kappa$.
    
    2. if $\text{ded }\kappa$ is not realized, then $S$ is unbounded in $\text{ded }\kappa = \sup S$, so there is $\lambda\in S$ s.t. $|V|<\lambda<\text{ded }\kappa$.

    Either way, we have $|V|<\lambda\in S$. By ii) of Theorem 7, there is a $\subsetneq $-chain of subsets of $\kappa$ with length $\lambda$. If $\mu$ exists then it assigns different values for these sets, we would need $|V|\ge\lambda$, contradiction! The ``in particular'' part follows from Lemma 6.
\end{proof}

\section{Remarks and questions}

We first observe that Corollary 9 strictly improves Theorem 2, by the following fact.\footnote{For a reference see e.g. Chernikov and Shelah \cite{Chernikov_2015}.}

\begin{fact}
    \[\kappa<\text{ded }\kappa \le(\text{ded }\kappa)^{\aleph_0}\le 2^\kappa.\]
\end{fact}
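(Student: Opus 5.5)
We prove the three inequalities separately; the outer two are elementary and the middle one is trivial.

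\emph{The lower bound $\kappa<\text{ded }\kappa$.} We exhibit a linear order of size $\kappa$ with more than $\kappa$ cuts. Let $\mu$ be the least cardinal with $2^\mu>\kappa$, so $\mu\le\kappa$ and $2^{\nu}\le\kappa$ for all $\nu<\mu$. Take the tree ${}^{<\mu}2$ of binary sequences of length $<\mu$, ordered lexicographically. Its size is $\sum_{\nu<\mu}2^\nu$, which is at most $\mu\cdot\kappa=\kappa$. Each branch $f\in{}^{\mu}2$ determines a cut: put on the left all $s$ that are lexicographically below $f$, comparing at the first place they differ, or with $s$ an initial segment of $f$ treated consistently (say, placed on the left). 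Distinct branches first differ at some $\alpha<\mu$, and the node $f\restriction(\alpha+1)$ or $g\restriction(\alpha+1)$ then separates the two cuts. So there are at least $2^\mu>\kappa$ cuts. If the order has size $<\kappa$, add $\kappa$ many points at the top; this does not decrease the number of cuts. Since $\text{ded }\kappa$ is a supremum, we get $\text{ded }\kappa\ge 2^\mu>\kappa$. The one step to check carefully is the bookkeeping $|{}^{<\mu}2|\le\kappa$, which uses the minimality of $\mu$. If $\mu=\kappa$ and $\kappa$ is singular, we still have $|{}^{<\kappa}2|\le\kappa\cdot\kappa$, because each $2^\nu\le\kappa$. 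So no case split is needed.

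\emph{The middle inequality $\text{ded }\kappa\le(\text{ded }\kappa)^{\aleph_0}$.} This is immediate, since $\lambda\le\lambda^{\aleph_0}$ for any cardinal $\lambda$.

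\emph{The upper bound $(\text{ded }\kappa)^{\aleph_0}\le 2^\kappa$.} Any linear order of size $\kappa$ has at most $2^\kappa$ cuts, because a cut is determined by its left part, which is a subset of the order. Hence $\text{ded }\kappa\le 2^\kappa$. It follows that
\[(\text{ded }\kappa)^{\aleph_0}\le(2^\kappa)^{\aleph_0}=2^{\kappa\cdot\aleph_0}=2^\kappa,\]
using that $\kappa$ is infinite.

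The only real content is the construction giving the lower bound. The upper bound is counting, and the middle inequality is trivial; Chernikov--Shelah is cited in the paper for these standard facts.
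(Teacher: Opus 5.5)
Your proof is correct and follows the same route as the paper. The paper's footnote takes the least $\mu$ with $2^\mu>\kappa$ and uses the tree $2^{<\mu}$, which has size $\le\kappa$ and $2^\mu$ branches, deferring the other inequalities to Chernikov--Shelah; you make explicit the passage from branches to cuts via a lexicographic order, the padding to size exactly $\kappa$, and the counting bound $(\text{ded }\kappa)^{\aleph_0}\le(2^\kappa)^{\aleph_0}=2^\kappa$. Two minor points: placing the initial segments of $f$ on the left requires the prefix-first (dictionary) convention for the lexicographic order, and $g\restriction(\alpha+1)\in{}^{<\mu}2$ uses that $\mu$ is infinite.
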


In particular, $\kappa<\text{ded }\kappa$.\footnote{To see this, let $\mu$ be the least such that $2^\mu >\kappa$, then $\mu\le\kappa$. Since $2^{<\mu}=\bigcup_{\lambda<\mu}2^\lambda$ and any $2^\lambda\le\kappa$ by the choice of $\mu$, we have $2^{<\mu}\le \kappa$, and it is a tree with $2^\mu > \kappa$ many paths.} Hence, our Corollary 9 strictly improves Theorem 2 in situations where $\text{ded } \kappa\ge \kappa^{++}$. This situation is indeed possible:

\begin{example}
    $\text{ded }\omega=2^\omega$, since $\mathbb{Q}$ has $2^\omega$-many Dedekind cuts. If $\mathsf{CH}$ fails, i.e. $2^\omega\ge\aleph_2$, then this is a situation where $\text{ded } \kappa\ge \kappa^{++}$. In this case, Corollary 9 tells us that there can't be part-whole size functions or regular total generalized probabilities for subsets of $\omega$ whose range has cardinality less than the continuum. This is an information that is not available from Theorem 2.
\end{example}

Thus the above example, in particular, answers Question 5 in the negative. Our improved version of Corollary 3 is the following: 

\begin{corollary}
    If $\text{ded }\kappa=2^\kappa$, then there exists a regular total probability function $\mu: 2^\kappa\rightarrow V$ iff $|V|\ge 2^\kappa$.
\end{corollary}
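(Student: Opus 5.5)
The plan is to prove the two directions of the biconditional separately, using Theorem 1 for existence and Corollary 9 for non-existence. Throughout, $\Omega$ is identified with $\kappa$, as in the Convention.

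For the ``if'' direction, suppose $|V|\ge 2^\kappa$. By Theorem 1 (Hofweber and Schindler) there is a hyperreal field $\mathbb{R}^*$ with $|\mathbb{R}^*|\le 2^\kappa$ and a regular total generalized probability $\mu:2^\kappa\to\mathbb{R}^*$. Read literally, the statement asks for $V$ of a given size rather than a specific structure, so I would interpret it as Corollary 3 does: a suitable range $V$ of that cardinality exists. To get $|V|$ equal to any prescribed cardinal $\ge 2^\kappa$, I would enlarge the range, for instance by replacing $\mathbb{R}^*$ with an elementary extension of the desired cardinality via L\"owenheim--Skolem. Composing $\mu$ with the inclusion of $\mathbb{R}^*$ into this extension preserves additivity and regularity, since the extension is still an ordered field and hence a generalized probability range. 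If one only wants $|V|\ge 2^\kappa$ in the loose sense, Theorem 1 already gives a range of size at most $2^\kappa$. It also gives size at least $2^\kappa$: the map $\mu$ is injective, because $A\ne B$ forces $\mu(A)\neq\mu(B)$ by part-whole applied to $A\cap B$. So we may take $|V|=2^\kappa$ on the nose.

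For the ``only if'' direction, suppose $\mu:2^\kappa\to V$ is a regular total generalized probability. By Lemma 6 its reduct is a part-whole size function. If we had $|V|<2^\kappa=\text{ded }\kappa$, Corollary 9 would forbid such a function. Hence $|V|\ge 2^\kappa$. Note that the hypothesis $\text{ded }\kappa=2^\kappa$ is used only here, to turn the threshold $\text{ded }\kappa$ of Corollary 9 into $2^\kappa$.

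The only delicate point is fixing the reading of ``there exists $\mu:2^\kappa\to V$'' in the ``if'' direction, which I would settle exactly as Corollary 3 does. The remaining steps are direct citations of Theorem 1, Lemma 6 and Corollary 9.
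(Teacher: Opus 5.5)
Your structure is the one the paper intends. The paper states the corollary without proof, as the analogue of Corollary 3: existence comes from Theorem 1, and non-existence from Lemma 6 and Corollary 9, with $\text{ded }\kappa=2^\kappa$ turning the threshold into $2^\kappa$. Your upward L\"owenheim--Skolem step is a fine way to realize any prescribed $|V|\ge 2^\kappa$. Strictly, you should take the nonnegative part of the field, so that $0$ is least as Definition 1 requires; it has the same cardinality.

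The aside claiming that $\mu$ is injective is false, however. The part-whole principle only separates values of $\subsetneq$-comparable sets. Applied to $A\cap B$, it yields $\mu(A\cap B)<\mu(A)$ and $\mu(A\cap B)<\mu(B)$, which says nothing about whether $\mu(A)=\mu(B)$. By additivity and cancellation in the field, $\mu(A)=\mu(B)$ holds iff $\mu(A\setminus B)=\mu(B\setminus A)$, and this happens routinely. For example, for a fine ultrafilter $U$ on $[\kappa]^{<\omega}$, the map $\mu(A)=[F\mapsto|A\cap F|]_U$ is a regular total generalized probability that gives every singleton the value $1$.

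In fact, if such $\mu$ were always injective, then $|V|\ge 2^\kappa$ would hold outright for every $\kappa$. Corollary 3 would then not need $\mathsf{GCH}$, and the Main Question would be trivial. Injectivity holds only along chains, which is exactly why the paper's bound is $\text{ded }\kappa$ rather than $2^\kappa$. Your conclusion $|\mathbb{R}^*|=2^\kappa$ is still true under the hypothesis, but the reason is your ``only if'' direction (Corollary 9 together with $\text{ded }\kappa=2^\kappa$). Alternatively, simply delete the aside, since L\"owenheim--Skolem already gives an extension of size exactly $2^\kappa$.
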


\begin{remark}
In particular we know that in the following situations, Main Question 4 admits negative answers:
\begin{itemize}
    \item $\text{ded }\omega=2^\omega$ so this can't happen at $\omega$. 
    \item $2^{<\kappa}$ has $2^\kappa$-many paths, so $2^{<\kappa}=\kappa$ implies $\text{ded }\kappa=2^\kappa$. 
    \item If there exists $n$ such that $2^\kappa=\kappa^{+n}$ then $\text{ded }\kappa=2^\kappa$. (A result of Baumgartner, see Chernikov and Shelah \cite{Chernikov_2015} or Mitchell \cite{mitchell1972aronszajn}.)
\end{itemize}
\end{remark}

On the other hand, a positive answer to Main Question 4 is not ruled out. The following result is due to Mitchell \cite{mitchell1972aronszajn}:

\begin{theorem}[Mitchell]
    If $\text{cf }\kappa\ge\omega_1$, then it is relative consistent with $\mathsf{ZFC}$ that $\text{ded }\kappa<2^\kappa$. 
\end{theorem}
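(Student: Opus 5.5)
This statement is Mitchell's theorem, a deep independence result. I would not try to reprove it from the earlier material in this note, which contains nothing of forcing strength. Instead I would follow the strategy of \cite{mitchell1972aronszajn} and build a forcing extension in which $2^\kappa$ is large while every linear order of size $\kappa$ has few cuts.

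By Theorem 8 it is enough to bound the sizes of $\subsetneq$-chains of subsets of $\kappa$, equivalently to bound branches through trees. The plan runs as follows.

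First, start from a ground model of $\mathsf{GCH}$. Choose a target cardinal $\lambda$ with $\kappa^+<\lambda$ and $\text{cf}\,\lambda>\kappa$, for instance $\lambda=\kappa^{++}$ when that is enough, or larger in the singular case.

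Second, force with a poset $\mathbb{P}$ that adds $\lambda$ subsets of $\kappa$, so that $2^\kappa\ge\lambda$ in the extension. The poset must also satisfy two preservation properties:
\begin{itemize}
    \item a chain condition strong enough to preserve cardinals, namely $\kappa^+$-c.c.\ together with enough closure, or a Mitchell-style product of Cohen forcing with a collapse-free term forcing;
    \item an ``approximation'' property: every set of size $\kappa$ in the extension lies in an intermediate model obtained from a subforcing of size $\le\kappa^+$.
\end{itemize}

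Third, bound $\text{ded}\,\kappa$ by $\kappa^+$ (or by some value below $\lambda$). By Theorem 8, this amounts to showing that any linear order $L$ of size $\kappa$ in the extension has at most $\kappa^+$ cuts. By the approximation property, $L$ appears in an intermediate model $M$ in which $2^\kappa=\kappa^+$, so $L$ has at most $\kappa^+$ cuts in $M$. It then remains to show that the quotient forcing adds no new cuts of $L$.

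The hypothesis $\text{cf}\,\kappa\ge\omega_1$ should enter exactly at this last point. A new cut is determined by a new branch through a tree of height $\text{cf}\,\kappa$ whose levels have size $<\kappa$. For such trees, a branch-preservation lemma in the style of Silver or Mitchell applies: forcing that is sufficiently closed, or that is a product with suitably closed pieces, adds no new branches through trees of uncountable cofinality height. The lemma fails at countable cofinality, which is why $\omega$ is excluded, and indeed $\text{ded}\,\omega=2^\omega$.

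The main obstacle is this no-new-cuts (no-new-branches) step, and in particular arranging the forcing so that both properties hold at once: $2^\kappa$ is blown up, yet cuts of $\kappa$-sized orders are captured by small submodels. My first attempt would be an Easton-style product of $\text{Add}(\kappa,1)$ factors with $<\text{cf}\,\kappa$-support. I would analyze a name for a cut via a $\Delta$-system argument to show that it depends on only $\kappa^+$ coordinates, and then apply the branch lemma to the remaining factors.

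Since the result is cited from Mitchell, the paper presumably defers to that reference rather than giving a proof.
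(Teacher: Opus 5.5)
The paper gives no proof of this theorem; it only cites Mitchell, as you expected. Judged on its own terms, though, your plan fails, and the failure lies in the choice of model, not only in the step you flag as hard.

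First, your default target $\lambda=\kappa^{++}$ is ruled out by the Baumgartner result quoted in the paper's Remark: $2^\kappa=\kappa^{+n}$ implies $\text{ded}\,\kappa=2^\kappa$. So in any model of $\text{ded}\,\kappa<2^\kappa$, $2^\kappa$ must lie above every $\kappa^{+n}$.

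Second, and more seriously, the same Remark records that $2^{<\kappa}=\kappa$ implies $\text{ded}\,\kappa=2^\kappa$. For regular $\kappa$, your product of $\text{Add}(\kappa,1)$ factors over a $\mathsf{GCH}$ model is $\kappa$-closed. It therefore adds no bounded subsets of $\kappa$ and keeps $2^{<\kappa}=\kappa$, so your extension satisfies $\text{ded}\,\kappa=2^\kappa$. Concretely, the lexicographically ordered ground-model tree $(2^{<\kappa})^V$ has $\kappa$ elements and already lies in your intermediate model $M$. Every generic subset of $\kappa$ added by the quotient is a new branch through it, i.e.\ a new cut.

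So the ``no new cuts'' step is false, not merely hard. The branch lemma you invoke also does not hold in the form you state: Silver's lemma needs levels of size $<2^\mu$ for a forcing that is $\mu^+$-closed, and this fails for $(2^{<\kappa})^V$. For singular $\kappa$ of uncountable cofinality, Silver's singular cardinals theorem even gives $2^\kappa=\kappa^+$ whenever $\mathsf{GCH}$ holds below $\kappa$. Finally, your reduction of a cut to a branch through a tree of height $\text{cf}\,\kappa$ with levels of size $<\kappa$ is unsupported: the left side of a cut can have any cofinality up to $\kappa$, including $\omega$.

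The missing idea is that the gap must come from below. You need $2^\mu>\kappa$ for some $\mu<\kappa$, so the new subsets of $\kappa$ should be generated by small objects such as Cohen reals. The gap between $\text{ded}\,\kappa$ and $2^\kappa$ then comes from a cofinality jump, not from bounding $\text{ded}\,\kappa$ by $\kappa^+$.

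For instance, over a $\mathsf{GCH}$ model add $\lambda$ Cohen reals, where $\lambda>\kappa$ and $\text{cf}\,\lambda=\text{cf}\,\kappa$ (e.g.\ $\lambda=\aleph_{\omega_1}$ for $\kappa=\aleph_1$). Since $\text{cf}\,\kappa\ge\omega_1$, we get $2^{\aleph_0}=\lambda^{\aleph_0}=\lambda$, while K\"onig gives $2^\kappa=\lambda^\kappa\ge\lambda^{\text{cf}\,\lambda}>\lambda$.

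Now take a linear order $L$ of size $\kappa$. It lies in an intermediate model $M$ generated by $\kappa$ of the coordinates, and $2^\kappa=\kappa^+$ holds in $M$. Split the cuts of $L$ by their left side $X$:
\begin{itemize}
\item If $X$ has a countable cofinal subset, that subset determines the cut. There are at most $\kappa^{\aleph_0}\le\lambda$ such cuts, and many of them are new, so aiming for no new cuts at all cannot work.
\item Otherwise $X$ has uncountable cofinality. Then for every countable $a\in M$, $X\cap a$ is bounded by some $x\in X$, so $X\cap a=\{z\in a: z\le x\}\in M$. The $\aleph_1$-approximation property of Cohen forcing, applied to the quotient over $M$, puts $X$ in $M$. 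There are at most $\kappa^+$ such cuts.
\end{itemize}
Hence every linear order of size $\kappa$ has at most $\lambda$ cuts, and $\text{ded}\,\kappa\le\lambda<2^\kappa$.
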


We end this note with the following questions. To the knowledge of the author, they are open. Question 13 sharpens Question 4. Question 14 asks whether we can prove a certain version of a converse to Lemma 6.

\begin{question}
    Assume $\text{ded }\kappa<2^\kappa$, is it consistent that there is a part-whole size function $\mu: (2^\kappa, \subsetneq) \rightarrow (V, <)$ with $|V|<2^\kappa$? If so, are there regular total generalized probabilities?
\end{question}

\begin{question}
    Let $(V, <)$ be a partial order and let $\mu: 2^\kappa\rightarrow (V, <)$ be a part-whole size function. Does it follow that there is a regular total probability $\Tilde{\mu}: 2^\kappa\rightarrow (\Tilde{V}, 0, +, <)$ such that $|\tilde{V}| = |V|$?
\end{question}

We conclude with a remark on Question 14. If we are given a part-whole size function $\mu$ and attempt to lift it to $\Tilde{\mu}$, we face the following obstacle: \textit{a priori}, it seems consistent with the part-whole principle that there exist disjoint sets $A, B, C$ such that $\mu(A)=\mu(B)$ but $\mu(A\cup C)<\mu(B\cup C)$. In this case, if we define an additive structure on the range of $\mu$, it fails the third condition in the definition of a generalized probability range.

\printbibliography

\end{document}